\documentclass[11pt,reqno]{actasmonteiro}

\usepackage{amssymb,latexsym}
\usepackage{url}
\usepackage{color}

\newtheorem{thm}{Theorem}

\newtheorem{lem}{Lemma}
\newtheorem{cor}{Corollary}

\begin{document}


\title[Continuity properties of strongly singular operators]{Continuity properties of strongly singular integral operators for extreme values of $p$}


\author{Fabio Berra}
\address{Facultad de Ingeniería Química (CONICET-UNL), Santa Fe, Argentina}
\email{fberra@santafe-conicet.gov.ar}


\author{Gladis Pradolini}
\address{Facultad de Ingeniería Química (CONICET-UNL), Santa Fe, Argentina}
\email{gladis.pradolini@gmail.com}


\author{Wilfredo Ramos}
\address{Instituto de Modelado e Innovación Tecnológica (CONICET-UNNE), Corrientes, Argentina}
\email{wilfredo.ramos@comunidad.unne.edu.ar}


\author{Ignacio Viltes}
\address{Facultad de Ingeniería Química (CONICET-UNL), Santa Fe, Argentina}
\email{viltesignacio@hotmail.com}

\thanks{\thanks{The authors were supported by Consejo Nacional de Investigaciones Científicas y Técnicas (CONICET), Universidad Nacional del Litoral and Gobierno de la Provincia de Santa Fe (Argentina)}}

\subjclass[2010]{Primary: 42B20; Secondary: 42B15, 35S30}

\begin{abstract}
In this work, we establish continuity properties of strongly singular integral operators for extreme values of $p$. Particularly, weighted $L^\infty$-$BMO$ boundedness is obtained, generalizing Miyachi's result to the context of Muckenhoupt weights. As an application, we get an alternative proof of Chanillo's weighted $L^p$ estimates via extrapolation techniques.
\end{abstract}

\maketitle

\section{Introduction}
Given a positive real number $b$, we  consider the strongly singular integral operator
\[T_b f(x)=\operatorname{p.v.}\int_{\mathbb{R}^n}K_b (x-y)f(y)\,dy,\] 
where \[K_b(x)=\frac{e^{i|x|^{-b}}}{|x|^{n}}\chi_{\{|x|\leq 1\}}(x).\]
The operator above arises as convolution analogue of certain well-known singular Fourier multipliers (cf. \cite{Z02, W65, M81}). The expression \emph{strongly singular integral} reflects how the oscillatory component of $K_b$ compromises the regularity of the kernel near the diagonal $x=y$. This behavior gives rise to a generalized smoothness condition, expressed as
\[\int_{\{|y|>2|x|^{1/(1+b)}\}}|K_b(x-y)-K_b(y)|\,dy\leq C,\qquad 0<|x|\leq 1,\]
rendering classical Calderón–Zygmund techniques not applicable unless $b\equiv 0$. 
However, it is well known that $K_b\ast f$ satisfies strong-type $(p,p)$ inequalities for $1<p<\infty$ (\cite{H59, W65}), weak-type $(1,1)$ estimates (\cite{F70}) and it can be extended to a bounded operator from $L^{\infty}$ into $BMO$ \cite{FS72, M81}, sharing the same continuity properties of the aforementioned operators.

The weighted $A_p$ theory for strongly singular integral operators was developed by Sagun Chanillo (\cite{C84}), relying on a key tool that involves the auxiliary kernel
\[\tilde{K}_{b,p'}(x)=\frac{e^{i|x|^{-b}}}{|x|^{n(2+b)/p'}}\chi_{\{|x|\leq 1\}}(x),\]
and proving that  
\begin{equation} \label{lem: Lemma 2.1 (Chanillo)}
\|\tilde{K}_{b,p'}\ast f\|_{L^{p'}}\leq C\|f\|_{L^{p}}
\end{equation} 
for $1<p<\infty$ satisfying $(2+b)/p'<1$.
As far as we know, weighted boundedness problems remain open when $p=\infty$, since the classical results in \cite{FS72, M81} address the unweighted case only, as well as more recent extensions of these operators (\cite{DMI80, AM86}). 

In this work, we establish the following main result, which is a weighted version of the corresponding estimate obtained in \cite{M81}.

\begin{thm} \label{thm: Acotación L^infinto-BMO}
Let $b$ be a positive real number and let $w\in A_{1}$. Then there exists a positive constant $C$ such that the inequality 
\begin{equation} \label{eq: Acotación L^infinto-BMO}
\| \chi_Bw^{-1}\|_{L^{\infty}}\left(\frac{1}{|B|}\int_{B}|T_{b}f(x)-(T_{b}f)_B|\,dx\right)\leq C\| fw^{-1}\|_{L^{\infty}}
\end{equation}
holds for every ball $B\subseteq\mathbb{R}^n$ and every measurable function $f$ for which $fw^{-1}\in L^\infty$.
\end{thm}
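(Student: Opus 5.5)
We follow Miyachi's unweighted argument and adapt it to weights. Fix a ball $B=B(x_0,r)$ and $f$ with $\|fw^{-1}\|_{L^\infty}=1$, so that $|f|\le w$ a.e. Since $w\in A_1$, we have $Mw\le C w$ a.e. It follows that
\[
\|\chi_B w^{-1}\|_{L^\infty}\,\frac{1}{|B|}\int_{\lambda B} w \le C\lambda^n\,\|\chi_B w^{-1}\|_{L^\infty}\inf_{B}Mw\le C\lambda^n,
\]
and similarly for any ball $\tilde B\supseteq B$ we get $w(\tilde B)/|\tilde B|\le C\inf_B w\le C/\|\chi_Bw^{-1}\|_{L^\infty}$. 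This is the only way the weight will enter: every average of $w$ over a ball containing $B$ is controlled by $\|\chi_Bw^{-1}\|_{L^\infty}^{-1}$. It therefore suffices to bound the mean oscillation of $T_bf$ on $B$ by a constant times $\sup_{\tilde B\supseteq B} w(\tilde B)/|\tilde B|$. We split into two cases according to the size of $r$.

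\textbf{Large balls, $r\ge 1$.} Since $K_b$ is supported in $|x|\le1$, we have $T_bf=T_b(f\chi_{3B})$ on $B$. We use the $L^2$-boundedness of $T_b$ (it is a convolution with a bounded multiplier, \cite{H59,W65}) and Cauchy--Schwarz:
\[
\frac1{|B|}\int_B|T_bf|\le\Big(\frac1{|B|}\int|T_b(f\chi_{3B})|^2\Big)^{1/2}\le C\Big(\frac1{|3B|}\int_{3B}w^2\Big)^{1/2}.
\]
To handle the $w^2$ average we use the reverse Hölder inequality: $w\in A_1$ satisfies $RH_{s}$ for some $s>1$. To avoid needing $s\ge 2$, we instead use $L^{p}$-boundedness of $T_b$ with $1<p\le s$. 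This gives the bound $C(\frac1{|3B|}\int_{3B}w^p)^{1/p}\le C\,w(3B)/|3B|$, as desired.

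\textbf{Small balls, $r<1$.} Set $\rho=r^{1/(1+b)}\ge r$ and $\tilde B=B(x_0,2\rho)$, and write $f=f_1+f_2$ with $f_1=f\chi_{\tilde B}$. For $f_2$, we subtract the constant $c=T_bf_2(x_0)$ and apply the Hörmander-type smoothness condition from the introduction, since $|y-x_0|>2|x-x_0|^{1/(1+b)}$ for $x\in B$. This must be weighted, so we decompose $\mathbb{R}^n\setminus\tilde B$ into dyadic annuli $2^{k+1}\tilde B\setminus 2^k\tilde B$, $k\ge 1$, up to scale $\sim1$. On each annulus we bound $|K_b(x-y)-K_b(x_0-y)|$ pointwise. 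One bound comes from the mean value theorem, namely $r|y|^{-n-1-b}$. The other is the trivial bound $2|y|^{-n}$. Using $|f|\le w$, the annulus contributes at most
\[
\min\big(1, r(2^k\rho)^{-1-b}\big)\,\frac{w(2^{k+1}\tilde B)}{|2^{k+1}\tilde B|}.
\]
Since $r\rho^{-1-b}=1$, the factor equals $2^{-k(1+b)}$, and these terms sum geometrically. Each average is controlled by $\|\chi_Bw^{-1}\|^{-1}_{L^\infty}$ as above, so this piece is fine.

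\textbf{The local term $f_1$: main obstacle.} The difficulty is that $\tilde B$ is much larger than $B$, so the $L^2$ bound only gives
\[
\frac{1}{|B|}\int_B |T_bf_1| \le |B|^{-1/2}\|f_1\|_2\sim (\rho/r)^{n/2}\Big(\frac1{|\tilde B|}\int_{\tilde B} w^2\Big)^{1/2},
\]
which blows up. Here we use Miyachi's refined estimate, which exploits oscillation: it is the $L^p\to L^{p'}$ bound \eqref{lem: Lemma 2.1 (Chanillo)} for $\tilde K_{b,p'}$, or equivalently the decay $|\widehat{K_b}(\xi)|$-type estimate $\|T_b g\|_{L^2}$ for $g$ supported in a ball of radius $\rho$. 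Concretely, we split $K_b=K_b\chi_{|x|\le\rho}+K_b\chi_{|x|>\rho}$, with pieces $T^{(1)}$ and $T^{(2)}$.
\begin{itemize}
\item The truncated operator $T^{(2)}$ has kernel bounded by $\rho^{-n}$ on the relevant region, which gives a pointwise bound by $C\,w(3\tilde B)/|\tilde B|$.
\item The operator $T^{(1)}$ has multiplier bounded by $C\min(1,(\rho|\xi|)^{\dots})$. Following Miyachi, we factor it through $\tilde K_{b,p'}$-type kernels and use Hölder with $p'$ large together with reverse Hölder for $w$ at exponent $p$. The aim is to show
\[
\frac1{|B|}\int_B|T^{(1)}f_1|\le C\,\frac{w(\tilde B)}{|\tilde B|}.
\]
\end{itemize}
The exponent bookkeeping here must match $r=\rho^{1+b}$ exactly, through the condition $(2+b)/p'<1$ with $p$ close to $1$, where $w\in RH_p$. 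This matching is the step we expect to be the hardest to carry out.
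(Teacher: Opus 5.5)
\textbf{There is a genuine gap: the local term $f\chi_{\tilde B}$, which is the heart of the theorem, is left as an aim and not proved.}

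Your large-ball case and your treatment of the far part $f\chi_{\mathbb{R}^n\setminus\tilde B}$ are essentially the paper's; the latter is its weighted H\"ormander lemma.

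A small correction in the far part: your $\min$ bound fails in the shell $1-r<|x_0-y|<1+r$. There only one of $K_b(x-y)$, $K_b(x_0-y)$ is nonzero, so the mean value theorem does not apply. This is harmless: in that shell $|x_0-y|\geq\max(2r,1-r)\geq 2/3$, so the trivial bound costs only $w(B(x_0,2))\lesssim\operatorname{ess\,inf}_B w$.

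For the local term, the route you sketch also does not work as stated. If you write $K_b(x-y)=\tilde K_{b,p'}(x-y)\,|x-y|^{-(n-n(2+b)/p')}$, the second factor depends on $x$ and blows up at $y=x$. So \eqref{lem: Lemma 2.1 (Chanillo)} cannot be applied to $f\chi_{\tilde B}$, whether or not you truncate the kernel at $|x|\le\rho$.

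Two ideas are missing.

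\emph{First}, split off $f\chi_{2B}$ and handle it exactly as in your large-ball case, using $L^p$-boundedness plus reverse H\"older. This gives $w(2B)/|2B|\lesssim\operatorname{ess\,inf}_B w$. Do not aim for $w(\tilde B)/|\tilde B|$: it can be much smaller, and it is not what this piece yields.

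\emph{Second}, for $y\in 2\tilde B\setminus 2B$ you have $|x_0-y|\geq 2r>2|x-x_0|$. So you can freeze the bad factor at $x_0$, replacing $|x-y|^{-(n-n(2+b)/p')}$ by $|x_0-y|^{-(n-n(2+b)/p')}$.
\begin{itemize}
\item \emph{Error term.} By the mean value theorem it is at most $|x-x_0|\,|f(y)|\,|x_0-y|^{-n-1}$. Over the annuli $2^{k+1}B\setminus 2^kB$ and by $A_1$, this sums to $\lesssim\sum_k 2^{-k}\operatorname{ess\,inf}_B w$.
\item \emph{Main term.} It is $\tilde K_{b,p'}*g$ with $g=f\chi_{2\tilde B\setminus 2B}\,|x_0-\cdot|^{-(n-n(2+b)/p')}$. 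H\"older on $B$ (the $L^1$ average is at most the $L^{p'}$ average) and Chanillo's $L^p\to L^{p'}$ bound reduce it to $|B|^{-1/p'}\|g\|_{L^p}$.
\end{itemize}
Now compute $\|g\|_{L^p}$ over the annuli $2^kB$, $1\le k\le k_0$, where $2^{k_0}r\sim r^{1/(1+b)}$. Using reverse H\"older at exponent $p$ (chosen close to $1$ so that also $(2+b)/p'<1$), you get
\[
|B|^{-1/p'}\Bigl(\sum_{k=1}^{k_0}|2^kB|^{(p-1)(1+b)}\Bigr)^{1/p}\operatorname{ess\,inf}_B w\approx |B|^{b/p'}r^{-nb/p'}\operatorname{ess\,inf}_B w\approx\operatorname{ess\,inf}_B w .
\]
This is exactly the exponent matching with $\rho=r^{1/(1+b)}$ that you flagged as the hardest step but did not carry out.
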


Furthermore, by applying Theorem 3 in \cite{HMS88}, we obtain an alternative proof of the weighted estimates in \cite{C84} through extrapolation techniques, leading to the following corollary.

\begin{cor}
Let $b$ be a positive real number and let $w\in A_p$. Then there exists a positive constant $C$ such that the inequality
\[\|K_b\ast f\|_{L^p(w)}\leq C\|f\|_{L^p(w)}\]
holds for every function $f\in L^p(w)$.
\end{cor}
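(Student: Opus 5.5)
The plan is to obtain the corollary from Theorem~\ref{thm: Acotación L^infinto-BMO} by the extrapolation theorem of \cite{HMS88} (Theorem 3 there), which is exactly the route the paper announces.

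That extrapolation result, in the form we need, says the following. Let $T$ be a linear operator, or more generally a sublinear one. Suppose that for every $w\in A_1$ the pair $(w^{-1},w^{-1})$ satisfies the weighted $L^\infty$-$BMO$ estimate, namely
\[
\|\chi_B w^{-1}\|_{L^\infty}\,\frac{1}{|B|}\int_B|Tf-(Tf)_B|\le C\|fw^{-1}\|_{L^\infty}.
\]
Suppose also that $T$ is bounded on $L^{p_0}(w)$ for some $p_0$ and all $w\in A_{p_0}$, or at least that it satisfies a qualitative a priori bound. Then $T$ is bounded on $L^p(w)$ for every $1<p<\infty$ and every $w\in A_p$.

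Step 1: check the hypotheses on $T=T_b$. Linearity is clear. The unweighted $L^2$ boundedness of $T_b$ (\cite{H59, W65}) gives the a priori finiteness needed to run the argument on a dense class, for instance bounded functions with compact support. That class is dense in $L^p(w)$ for $w\in A_p$.

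Step 2: feed in Theorem~\ref{thm: Acotación L^infinto-BMO}. It gives the endpoint hypothesis for every $w\in A_1$, with a constant depending only on $[w]_{A_1}$, $n$ and $b$.

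Step 3: read the conclusion of \cite{HMS88} directly. That is, $\|K_b\ast f\|_{L^p(w)}\le C\|f\|_{L^p(w)}$ for $w\in A_p$ and $f$ in the dense class. Then extend to all $f\in L^p(w)$ by density, defining $K_b\ast f$ as the limit.

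The main point of care is the precise form of the hypothesis in \cite{HMS88}. If it is phrased with the sharp maximal function, say $(T f)^\#\,w^{-1}\in L^\infty$ or $\|(Tf)^\#\|_{L^\infty(w^{-1})}\lesssim\|f\|_{L^\infty(w^{-1})}$, we have to translate. Taking the supremum over balls $B\ni x$ in Theorem~\ref{thm: Acotación L^infinto-BMO} gives
\[
(T_bf)^\#(x)\,\operatorname{ess\,inf}_B w^{-1}\lesssim\|fw^{-1}\|_\infty .
\]
Here the quantity $\|\chi_Bw^{-1}\|_\infty$ is $(\operatorname{ess\,inf}_Bw)^{-1}$. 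Combined with the $A_1$ condition $\operatorname{ess\,inf}_B w\simeq w_B$, this gives the form that is required.

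If instead the extrapolation also needs a duality step, we use that $T_b^*$ has kernel $\overline{K_b(-x)}=\overline{K_b(x)}$. This is a strongly singular kernel of the same type, with $e^{-i|x|^{-b}}$ in place of $e^{i|x|^{-b}}$. Theorem~\ref{thm: Acotación L^infinto-BMO} applies verbatim to it, so $T_b^*$ satisfies the same endpoint estimate, and by duality this covers the range $p$ near $1$.
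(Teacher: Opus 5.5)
Your proposal follows the paper's route exactly: the corollary is obtained by feeding Theorem~1, whose constant depends only on $[w]_{A_1}$, $n$ and $b$, into Theorem~3 of \cite{HMS88}, and your remarks on a priori finiteness, density and duality are harmless elaborations of details the paper leaves implicit. One small slip in your translation to the sharp maximal function: the factor coming from Theorem~1 is $\|\chi_B w^{-1}\|_{L^\infty}=\operatorname{ess\,sup}_B w^{-1}=(\operatorname{ess\,inf}_B w)^{-1}$, not $\operatorname{ess\,inf}_B w^{-1}$; since it dominates $w(x)^{-1}$ for a.e.\ $x\in B$, it directly gives $(T_bf)^\#\lesssim w\,\|fw^{-1}\|_{L^\infty}$ a.e., with no further use of the $A_1$ condition.
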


\section{Proofs of the main results}
Before proceeding to the proof of Theorem \ref{thm: Acotación L^infinto-BMO} we state and prove the following estimate, which will be useful for this purpose.
\begin{lem} \label{lem: Hörmander pesada}
Let $b$ be a positive real number and let $w$ be a weight. Then there exists a positive constant $C$ such that the inequality
\[\int_{\{|x_0-y|>2|x_0-x|^{1/(1+b)}\}}|K_b(x-y)-K_b(x_0-y)|\,w(y)\,dy\leq CMw(x)\]
holds for almost every $x\in\mathbb{R}^n\setminus\{x_0\}$ with $|x_0-x|\leq 1$.
\end{lem}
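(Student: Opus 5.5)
Write $r=|x_0-x|\le 1$ and $\delta=r^{1/(1+b)}$. Since $r\le 1$ we have $r\le\delta$, and the region of integration is $E=\{y:|x_0-y|>2\delta\}$. The plan is to follow the proof of the unweighted smoothness condition from the introduction, but to keep track of the weight: every time Lebesgue measure of a set would appear, we bound a $w$-integral over a ball or annulus centered at $x$ instead. The basic tool is that on $E$ we have $|x_0-y|>2\delta\ge 2r$, hence $|x-y|\simeq|x_0-y|$. Consequently any annulus $\{2^k\delta<|x_0-y|\le 2^{k+1}\delta\}$ sits inside the ball $B(x,2^{k+2}\delta)$, and so
\[\int_{\{|x_0-y|\le 2^{k+1}\delta\}} w\le |B(x,2^{k+2}\delta)|\,Mw(x)\le C(2^k\delta)^n Mw(x).\]

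First I restrict the support. Both kernels vanish unless $|x-y|\le1$ or $|x_0-y|\le1$, so the integral lives on $E\cap\{|x_0-y|\le 2\}$. On this set I use the triangle inequality on the cutoff to split
\[|K_b(x-y)-K_b(x_0-y)|\le |x-y|^{-n}\,\bigl|e^{i|x-y|^{-b}}-e^{i|x_0-y|^{-b}}\bigr|\chi_{\{|x-y|\le1\}} + \bigl||x-y|^{-n}-|x_0-y|^{-n}\bigr|+|x_0-y|^{-n}\,|\chi_{\{|x-y|\le1\}}-\chi_{\{|x_0-y|\le1\}}|.\]
The middle term is at most $Cr|x_0-y|^{-n-1}$ by the mean value theorem. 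Summing over dyadic annuli with the basic tool gives $C\sum_k r(2^k\delta)^{-1}Mw(x)\le C Mw(x)$, because $r\le\delta$. The last term is supported where $|x_0-y|\simeq1$, a set of $y$ inside $B(x,3)$, and there the factor $|x_0-y|^{-n}$ is bounded by a constant; so this term is also bounded by $CMw(x)$.

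The first term carries the real content. By the mean value theorem applied to $t\mapsto t^{-b}$ and $|e^{ia}-e^{ic}|\le |a-c|$, the oscillatory difference is at most $\min\{2,\,Cr|x_0-y|^{-b-1}\}$. The summand then becomes $C\,r|x_0-y|^{-n-b-1}$, and its integral over $E$, summed dyadically, is
\[\sum_{k\ge1} r\,(2^k\delta)^{-1-b}\,Mw(x)\le C\,r\,\delta^{-1-b}\,Mw(x)=C\,Mw(x).\]
This is exactly where the choice $\delta=r^{1/(1+b)}$ is used: it makes $r\delta^{-1-b}=1$. The main care needed is checking that the annuli stay centered compatibly with $x$ and that nothing beyond the local $Mw$ bound is used, which the basic tool handles. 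The bound holds for every $x$ with $Mw(x)<\infty$, hence almost everywhere as soon as $Mw$ is finite a.e.; if $Mw\equiv\infty$ the statement is trivial.
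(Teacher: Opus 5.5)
Your proof is correct and follows essentially the paper's route. Your cutoff-mismatch term is exactly the paper's Cases 1 and 2 (where $|x_0-y|\simeq 1$ and the integral is bounded by $\int_{B(x,C)}w\lesssim Mw(x)$), and your phase and amplitude terms together form its Case 3, closed with the same dyadic annuli of radius $2^k|x_0-x|^{1/(1+b)}$ about $x_0$ and the identity $|x_0-x|\,\delta^{-1-b}=1$. The only cosmetic difference is that the paper applies the mean value theorem to $t\mapsto e^{it^{-b}}t^{-n}$ in one step and absorbs the $t^{-n-1}$ part into $t^{-n-1-b}$ using $t\le 1$, whereas you bound the amplitude difference separately using $r\le\delta$.
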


\begin{proof}
Since $|x_0-x|\leq 1$ and $b>0$, we get $|x_0-y|>2|x_0-x|^{1/(1+b)}\geq 2|x_0-x|$.\linebreak
Then, it is clear that
\begin{equation} \label{eq: Triangular auxiliar (lema Hörmander pesada)}
\frac{|x_0-y|}{2}<|x_0-y|-|x_0-x|\leq |x-y|\leq |x_0-y|+|x_0-x|<\frac{3}{2}|x_0-y|.
\end{equation}
Let us denote
\begin{align*}
I&=\int_{\{|x_0-y|>2|x_0-x|^{1/(1+b)}\}}|K_b(x-y)-K_b(x_0-y)|\,w(y)\,dy\\
&=\int_{\{|x_0-y|>2|x_0-x|^{1/(1+b)}\}}\left|\frac{e^{i|x-y|^{-b}}}{|x-y|^{n}}\chi_{\{|x-y|\leq 1\}}(x-y)-\frac{e^{i|x-x_0|^{-b}}}{|x-x_0|^{n}}\chi_{\{|x-x_0|\leq 1\}}(x-x_0)\right|\,w(y)\,dy.
\end{align*}
We shall consider three cases.

\noindent\emph{Case }1. $|x-y|\leq 1$ and $|x_0-y|>1$. From \eqref{eq: Triangular auxiliar (lema Hörmander pesada)} we obtain that $1/2<|x-y|\leq 1$ and, consequently,
\begin{align*}
I&=\int_{\{|x_0-y|>2|x_0-x|^{1/(1+b)}\}}\left|\frac{e^{i|x-y|^{-b}}}{|x-y|^{n}}\chi_{\{|x-y|\leq 1\}}(x-y)\right|w(y)\,dy\\
&\leq \int_{\{1/2<|x-y|\leq 1\}}\frac{w(y)}{|x-y|^{n}}\,dy\\
&\leq 2^{n}\int_{B(x,1)}w(y)\,dy\\
&\lesssim Mw(x).
\end{align*}

\noindent\emph{Case }2. $|x-y|>1$ y $|x_0-y|\leq 1$. Inequality \eqref{eq: Triangular auxiliar (lema Hörmander pesada)} now implies that $|x_0-y|^{-n}\leq (2/3)^{-n}$. Therefore, 
\begin{align*}
I&=\int_{\{|x_0-y|>2|x_0-x|^{1/(1+b)}\}}\left|\frac{e^{i|x_0-y|^{-b}}}{|x_0-y|^{n}}\chi_{\{|x_0-y|\leq 1\}}(x_0-y)\right|w(y)\,dy\\
&\leq \int_{\{1\geq |x_0-y|>2|x_0-x|^{1/(1+b)}\}}\frac{w(y)}{|x_0-y|^{n}}\,dy\\
&\leq \left(\frac{3}{2}\right)^{n}\int_{B(x_0,1)}w(y)\,dy\\
&\lesssim Mw(x).   
\end{align*}

\noindent\emph{Case }3. $|x-y|\leq 1$ and $|x_0-y|\leq 1$. An application of the mean value theorem yields
\[\left|\frac{e^{i|x-y|^{-b}}}{|x-y|^{n}}-\frac{e^{i|x-x_0|^{-b}}}{|x-x_0|^{n}}\right|=\left|-e^{i\xi^{-b}}\left(\frac{n}{\xi^{n+1}}+\frac{ib}{\xi^{n+(1+b)}}\right)\right|||x-y|-|x_0-y||\lesssim \frac{|x_0-x|}{|x_0-y|^{n+(1+b)}},\]
where $\xi>0$ is a number between $|x-y|$ and $|x_0-y|$. Note that the last estimate follows from \eqref{eq: Triangular auxiliar (lema Hörmander pesada)} combined with the hypothesis. Let $\tilde{B}=B(x_0,|x-x_0|^{1/(1+b)})$. A standard covering argument provides
\begin{align*}
I&\lesssim |x_0-x|\int_{\{|x_0-y|>2|x_0-x|^{1/(1+b)}\}}\frac{w(y)}{|x_0-y|^{n+(1+b)}}\,dy\\
&\lesssim |x_0-x|\sum_{k=1}^{\infty}\int_{\{|x_0-y|\sim 2^{k}|x_0-x|^{1/(1+b)}\}}\frac{w(y)}{|x_0-y|^{n+(1+b)}}\,dy\\
&\lesssim |x_0-x|\sum_{k=1}^{\infty}\frac{2^{-k(1+b)}}{|x_0-x|}\left(\frac{1}{|2^{k+1}\tilde{B}|}\int_{2^{k+1}\tilde{B}}w(y)\,dy\right)\\
&\lesssim Mw(x).\qedhere
\end{align*}
\end{proof} 

\medskip

\begin{proof}[Proof of Theorem~\ref{thm: Acotación L^infinto-BMO}]
Since $w\in A_{1}$, there exists $1<p_0<\infty$ for which the reverse Hölder inequality
\begin{equation} \label{eq: Reverse Hölder (resultado principal)}
\left(\frac{1}{|B|}\int_B  w^p(x)\,dx\right)^{1/p}\leq \frac{C}{|B|}\int_B w(x)\,dx,
\end{equation}
holds for every ball $B\subseteq \mathbb{R}^n$ and every $1<p<p_0$. Furthermore, we can choose the exponent $p$ such that $(2+b)/p'<1$. If $fw^{-1}\in L^{\infty}$, it follows that $f\chi_K \in L^p$ whenever $K\subseteq \mathbb{R}^n$ is a compact set. 

Fixed any ball $B=B(x_0, r)\subseteq \mathbb{R}^n$, we show that there exists a positive constant $\lambda$ such that the following inequality holds
\begin{equation} \label{eq: Acotación L^infinto-BMO (2)}
\frac{1}{|B|}\int_{B}|K_{b}\ast f(x)-\lambda|\,dx\leq \left(\frac{C}{|B|}\int_B w(x)\,dx\right)\|fw^{-1}\|_{L^{\infty}}.
\end{equation}
We shall consider two cases.

\emph{Case }1. Let us assume $0<r\leq 1$ and define $\tilde{B}=B(x_0,r^{1/(1+b)})$. Let $f=\sum_{j=1}^{3} f_j(x)$, where
\[f_1=f\chi_{2B},\quad f_2=f\chi_{2\tilde{B}\setminus 2B},\quad\text{and}\quad f_3=f\chi_{\mathbb{R}^n \setminus 2\tilde{B}}.\]
Since $K_b\ast f$ is a linear operator, it suffices to verify \eqref{eq: Acotación L^infinto-BMO (2)} separately for each $f_j$, that is
\[|K_b\ast f(x)-\lambda|\leq |K_b\ast  f_1(x)|+|K_b\ast f_2(x)|+|K_b\ast f_3(x)-\lambda|.\]

We first deal with $f_1$. By applying Jensen’s inequality and the $L^p$-boundedness of $K_b\ast f$ for $1<p<\infty$, 
\begin{align}
\frac{1}{|B|}\int_{B}|K_b\ast f_1(x)|\,dx&\lesssim \left(\frac{1}{|B|}\int_{B}|K_{b}\ast f_1(x)|^{p}\,dx\right)^{1/p}\nonumber\\
&\lesssim \left(\frac{1}{|B|}\int_{\mathbb{R}^{n}}|f_{1}(x)|^{p}\,dx\right)^{1/p}\lesssim \left(\frac{1}{|2B|}\int_{2B}w^p(x)\,dx\right)^{1/p}\|fw^{-1}\|_{L^{\infty}}\nonumber\\
&\lesssim \left(\frac{1}{|2B|}\int_{2B}w(x)\,dx\right)\|fw^{-1}\|_{L^{\infty}},
\label{eq: Estimación f1 (resultado principal)}
\end{align}
by virtue of \eqref{eq: Reverse Hölder (resultado principal)}. Since $B\subseteq 2B$, the $A_1$ condition applied to $w$ leads to
\begin{equation} \label{eq: Condición A1 (resultado principal)}
\frac{1}{|2B|}\int_{2B}w(x)\,dx\lesssim \inf_{x\in 2B}w(x)\lesssim \inf_{x\in B}w(x)\lesssim \frac{1}{|B|}\int_{B}w(x)\,dx.
\end{equation}
Combining \eqref{eq: Estimación f1 (resultado principal)} and \eqref{eq: Condición A1 (resultado principal)}, we conclude the estimate for $f_1$.

\medskip

On the other hand, if $x\in B$ we get
\[K_{b}\ast f_{2}(x)=\int_{\mathbb{R}^{n}}\frac{e^{i|x-y|^{-b'}}}{|x-y|^{n}}\chi_{\{|x-y|\leq 1\}}(x-y)f_{2}(y)\,dy.\]
We split this expression into two terms as follows
{\small\begin{align*}
K_{b}\ast f_{2}(x)&=\int_{\mathbb{R}^n}\frac{e^{i|x-y|^{-b'}}}{|x-y|^{n(2+b)/p'}}\chi_{\{|x-y|\leq 1\}}(x-y)\left(\frac{1}{|x-y|^{n-n(2+b)/p'}}-\frac{1}{|x_{0}-y|^{n-n(2+b)/p'}}\right)f_{2}(y)\,dy\\
&\qquad +\int_{\mathbb{R}^n}\frac{e^{i|x-y|^{-b'}}}{|x-y|^{n(2+b)/p'}}\chi_{\{|x-y|\leq 1\}}(x-y)\left(\frac{f_{2}(y)}{|x_{0}-y|^{n-n(2+b)/p'}}\right)\,dy\\
&=A(x)+B(x).  
\end{align*}}
To majorize $|A(x)|$, note that $|x_{0}-y|\geq 2r> 2|x_{0}-x|$. By the  mean value theorem we obtain that
\[\left|\frac{1}{|x-y|^{n-n(2+b)/p'}}-\frac{1}{|x_{0}-y|^{n-n(2+b)/p'}}\right|\lesssim \frac{|x_{0}-x|}{|x_{0}-y|^{n-n(2+b)/p'+1}}.\]
By decomposing adequately the domain in the integral, we get that
\[|A(x)|\lesssim|x_{0}-x|\int_{\{|x_{0}-y|\geq 2r\}}\frac{|f(y)|}{|x_{0}-y|^{n+1}}\,dy\lesssim r\sum_{k=1}^{\infty}\frac{2^{-k}}{r}\left(\frac{1}{|2^{k+1}B|}\int_{2^{k+1}B}w(y)\,dy\right)\|fw^{-1}\|_{L^{\infty}}.\]
The desired estimate for $A(x)$ then follows similarly as in \eqref{eq: Condición A1 (resultado principal)} by the $A_{1}$-condition of $w$.

\medskip

Regarding the remaining term, observe that 
\[B(x)=\tilde{K}_{b,p'}\ast\left(\frac{f_2(\cdot)}{|x_0-\cdot|^{n-n(2+b)/p'}}\right)(x).\]
The condition $(2+b)/p' < 1$ enables the application of \eqref{lem: Lemma 2.1 (Chanillo)}, providing
{\small\[\left(\frac{1}{|B|}\int_B \left|\tilde{K}_{b,p'}\ast\left(\frac{f_2(\cdot)}{|x_0-\cdot|^{n-n(2+b)/p'}}\right)(x)\right|^{p'}\,dx\right)^{1/p'}\lesssim \frac{1}{|B|^{1/p'}}\left(\int_{\mathbb{R}^n}\frac{|f_2(x)|^{p}}{|x_0-x|^{n-n(p-1)(1+b)}}\, dx\right)^{1/p}.\]}
Select an integer $k_0$ such that $2^{k_0}r<2r^{1/(1+b)}\leq 2^{k_0+1}r$. Applying \eqref{eq: Reverse Hölder (resultado principal)} and \eqref{eq: Condición A1 (resultado principal)}, the integral above is bounded by
\begin{align*}
&\frac{1}{|B|^{1/p'}}\left(\sum_{k=1}^{k_0}|2^kB|^{(p-1)(1+b)}\frac{1}{|2^{k+1}B|}\int_{2^{k+1}B}w^p(x)\,dx\right)^{1/p}\|fw^{-1}\|_{L^{\infty}}\\
&\qquad\lesssim\frac{1}{|B|^{1/p'}}\left(\sum_{k=1}^{k_0}|2^kB|^{(p-1)(1+b)}\right)^{1/p}\left(\frac{1}{|B|}\int_{B}w(x)\,dx\right)\|fw^{-1}\|_{L^{\infty}}.
\end{align*}
Expanding the geometric sum, 
\begin{align*}
\frac{1}{|B|^{1/p'}}\left(\sum_{k=1}^{k_0}|2^kB|^{(p-1)(1+b)}\right)^{1/p}&=|B|^{b/p'}\left(\frac{(2^{n(p-1)(1+b)})^{k_0+1}-2^{n(p-1)(1+b)}}{2^{n(p-1)(1+b)}-1}\right)^{1/p}\\
&\lesssim |B|^{b/p'}(2^{k_0+1})^{n(1+b)/p'}\lesssim|B|^{b/p'}r^{-nb/p'}\approx 1,
\end{align*}
since $2^{k_0+1}\lesssim r^{-b/(1+b)}$.

\medskip

Finally, we deal with $f_3$. Since $f_3$ is supported away from $x_0$, by taking $\lambda=K_b\ast f_3(x_0)$, it follows that
\[|K_b\ast f_3(x_0)|\lesssim\left(\int_{\mathbb{R}^n\setminus 2\tilde{B}}\frac{w(x)}{|x_0-x|^n}\chi_{\{|x_0-x|\leq 1\}}(x)\,dx\right)\|fw^{-1}\|_{L^{\infty}}<\infty.\]
By Lemma~\ref{lem: Hörmander pesada}, since $w\in A_1$, we obtain
\begin{align*}
|K_b\ast f_3(x)-\lambda|&\lesssim \left(\int_{\{|x_0-y|>2|x_0-x|^{1/(1+b)}\}}|K_b(x-y)-K_b(x_0-y)|w(y)\,dy\right)\|fw^{-1}\|_{L^{\infty}}\\
&\lesssim Mw(x)\|fw^{-1}\|_{L^{\infty}}\\
&\lesssim w(x)\|fw^{-1}\|_{L^{\infty}},\qquad
\end{align*}
for a.e. $x\in B$, which implies the desired estimate for this case.

\emph{Case }2. Suppose that $r>1$. In this case, we write $f=f_1+f_2$, where $f_1=f\chi_{2B}$ and $f_2=f\chi_{\mathbb{R}^n\setminus 2B}$. Observe that the estimate for $f_1$ can be performed as in the previous case, since does not depend on the value of $r$. Thus, it only remains to verify the desired inequality for $f_2$. By definition, we obtain
\begin{align*}
\frac{1}{|B|}\int_B|K_b\ast f_2(x)-(K_b\ast f_{2})_B|\,dx&\lesssim\frac{1}{|B|}\int_B\int_{\mathbb{R}^n\setminus 2B}|K_b(x-y)||f(y)|\,dy\,dx\\
&\lesssim\frac{1}{|B|}\int_{B}\int_{\mathbb{R}^n\setminus 2B}\frac{|f(y)|}{|x-y|^{n}}\chi_{\{|x-y|\leq 1\}}\,dy\,dx.
\end{align*}
However, for $x\in B$ and $y\in\mathbb{R}^n\setminus 2B$, we have $|x-y|\geq 2r>2$, so the inner integrand vanishes. Therefore,
\[\frac{1}{|B|}\int_B|K_b\ast f_2(x)-(K_b\ast f_{2})_B|\,dx=0.\qedhere\]
\end{proof}





\end{document}